\documentclass[10pt]{amsart}

\usepackage{graphicx}
\usepackage{amsfonts,amsmath,latexsym,amssymb,amsthm}
\usepackage{verbatim}

\textheight=22cm
\textwidth=13.5cm
\hoffset=-1cm
\parindent=16pt

\newtheorem{theorem}{Theorem}[section]
\newtheorem{lemma}[theorem]{Lemma}
\newtheorem{proposition}[theorem]{Proposition}
\newtheorem{corollary}[theorem]{Corollary}

\theoremstyle{definition}

\newtheorem{question}[theorem]{Question}

\theoremstyle{remark}

\numberwithin{equation}{section}



\newcommand{\IQ}{\mathbb{Q}}
\newcommand{\Qbar}{\overline{\IQ}}

\newcommand{\coll}{{\mathcal{C}}}
\newcommand{\lb}{\eta}
\newcommand{\ub}{\theta}
\newcommand{\ratio}{\gamma}


\begin{document}

\title{Number fields without small generators}

\author{Jeffrey D. Vaaler}
\address{Department of Mathematics, University of Texas at Austin, 1 University Station C1200, Austin, Texas 78712}
\email{vaaler@math.utexas.edu}

\author{Martin Widmer}
\address{Department of Mathematics\\ 
Royal Holloway, University of London\\ 
TW20 0EX Egham\\ 
UK}
\email{martin.widmer@rhul.ac.uk}

\subjclass{Primary 11R04, 11G50; Secondary 11R06, 11R29}
\date{\today, and in revised form ....}

\dedicatory{}

\keywords{Algebraic number theory, small height}

\begin{abstract}
Let $D>1$ be an integer, and let $b=b(D)>1$ be its smallest divisor.
We show that there are infinitely many number fields of degree $D$
whose primitive elements all have relatively large height in terms of $b$, $D$ and the
discriminant of the number field. This provides a negative answer 
to a questions of W. Ruppert from 1998 in the case when $D$ is composite.
Conditional on a very weak form of a folk conjecture about the distribution of  
number fields, we negatively answer Ruppert's question for all $D>3$. 
\end{abstract}

\maketitle



\section{Introduction}\label{introductionchapter3}
Let $H(\cdot)$ be the absolute multiplicative Weil height on the algebraic numbers $\Qbar$, as defined in, e.g., \cite{BG}, and 
let $L\subset \Qbar$ be a number field of degree $D>1$.   
We are interested in bounds, expressed  in terms of the degree $D$ and the absolute discriminant $\Delta_L$ of $L$, 
for the smallest height of a generator. 
It is convenient to use the following invariant, introduced by Roy and Thunder \cite{8},
\begin{alignat*}1
\delta(L)=\inf\{H(\alpha);L=\IQ(\alpha)\}.
\end{alignat*}
By Northcott's Theorem the infimum is attained, and hence, $\delta(L)$ denotes the smallest height
of a generator of the extension $L$ over $\IQ$.
Silverman \cite[Theorem 1]{9} has shown that
\begin{alignat}1
\label{sil2}
\delta(L)\geq D^{-\frac{1}{2(D-1)}}
|\Delta_{L}|^{\frac{1}{2D(D-1)}}.
\end{alignat}
The following example due to Ruppert \cite[p.18]{Rupp} and Masser \cite[Proposition 1]{8}) 
shows that in this general situation the exponent $1/(2D(D-1))$ cannot be improved.
Let $p$ and $q$ be primes that satisfy $0<p<q<2p$.
Let $\alpha=(p/q)^{1/D}$, and let $L=\IQ(\alpha)$. Then, by the Eisenstein criterion, $L$ has degree $D$, 
and $p$ and $q$ are both totally ramified in $L$. Hence,  $(pq)^{D-1}|\Delta_L$, and thus
\begin{alignat}1\label{example}
H(\alpha)=q^{\frac{1}{D}}\leq (2pq)^{{\frac{1}{2D}}}
\leq2^{\frac{1}{2D}}|\Delta_{L}|^{\frac{1}{2D(D-1)}}.
\end{alignat}
Ruppert \cite[Question 1]{Rupp} asked whether the exponent is always sharp,
more precisely he proposed the following question.
\begin{question}[Ruppert, 1998]\label{Ruppert1}
Is there a constant $C_D$ such that for all number fields $L$ of degree $D$ 
\begin{alignat*}1
\delta(L)\leq C_D
|\Delta_{L}|^{\frac{1}{2D(D-1)}}?
\end{alignat*}
\end{question}
In fact Ruppert used the naive height but elementary inequalities between 
the naive and the Weil height show that Ruppert's  question is equivalent to the one stated above. 
Ruppert \cite[Proposition 2]{Rupp} himself answered this question in the affirmative for $D=2$.
The aim of this note is to answer Ruppert's question in the negative for all 
composite $D$.
\begin{theorem}\label{th1}
Let $b=b(D)>1$ be the smallest divisor of $D$, 
and suppose $\ratio$ is a real number such that
\begin{alignat*}1
\ratio<\left\{
    \begin{array}{lll}
      &1/(D(b+1)):&\text{ if }b\leq 3,\\
      &{1}/{(2D(b+1))}+{1}/{(Db^2(b+1))}:&\text{ otherwise.}
\end{array} \right.
\end{alignat*}
Then there exist infinitely many number fields $L$ of degree $D$ satisfying 
\begin{alignat*}1
\delta(L)> |\Delta_L|^{\ratio}.
\end{alignat*}
\end{theorem}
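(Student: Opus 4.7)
The core observation is that $D$ composite with smallest prime divisor $b$ allows the construction of degree-$D$ fields $L$ containing a proper subfield $K$ of index $b$. Since any generator $\alpha$ of $L/\IQ$ must lie outside $K$, we automatically get $L=K(\alpha)$ and $\alpha$ has degree exactly $b$ over $K$. This extra structural constraint, absent in Silverman's general argument, is the leverage that will force $H(\alpha)$ to exceed the stated bound.

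The plan proceeds in three stages. First, fix a number field $K$ of degree $D/b$ with small discriminant (any convenient fixed field works), and construct an infinite family $\{L_n\}_{n\geq 1}$ of degree-$D$ extensions of $\IQ$ containing $K$, with $|\Delta_{L_n}|\to\infty$ and with the growth carried by the relative discriminant $\mathfrak{d}_{L_n/K}$. A natural choice is Kummer-type, $L_n=K(\pi_n^{1/b})$ for a sequence of prime elements $\pi_n\in\mathcal{O}_K$ chosen so that $[L_n:K]=b$ and $L_n/K$ is (tamely) totally ramified above $\pi_n$, forcing $N_{K/\IQ}(\mathfrak{d}_{L_n/K})$ to grow like a power of $N(\pi_n)$. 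Second, for any generator $\alpha$ of $L_n/\IQ$, let $m_\alpha\in K[X]$ denote its minimal polynomial over $K$, monic of degree $b$. The relative discriminant $\mathfrak{d}_{L_n/K}$ divides a controlled multiple of $(\mathrm{disc}(m_\alpha))\subseteq\mathcal{O}_K$ (equality up to an index-squared factor when $\alpha$ is integral). Applying Mahler's inequality $|\mathrm{disc}(g)|\leq b^b M(g)^{2(b-1)}$ to each conjugate $\sigma(m_\alpha)$ under the $D/b$ embeddings $\sigma:K\hookrightarrow\Qbar$, together with the multiplicativity $\prod_\sigma M(\sigma m_\alpha)=M(\tilde f_\alpha)$ (where $\tilde f_\alpha$ is the monic minimal polynomial of $\alpha$ over $\IQ$), yields an upper bound of the form $N_{K/\IQ}(\mathfrak{d}_{L_n/K})\leq C_{b,D}\,H(\alpha)^{\kappa(D,b)}$. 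Combining with the tower formula $|\Delta_{L_n}|=|\Delta_K|^b\,N_{K/\IQ}(\mathfrak{d}_{L_n/K})$ and using that $|\Delta_K|$ is fixed while $|\Delta_{L_n}|\to\infty$ then produces a lower bound of the form $\delta(L_n)\gg|\Delta_{L_n}|^{1/\kappa(D,b)}$.

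The hardest step is extracting the precise exponent $\gamma$ claimed in the theorem: the naive Mahler estimate above produces $\kappa(D,b)=2D(b-1)$, which matches $D(b+1)$ only at $b=3$ and is off in the other cases. Refining the exponent will require either a sharper discriminant estimate (accounting, for instance, for non-integral generators by tracking the leading coefficient $a_D$ of the $\IZ$-minimal polynomial of $\alpha$) or an auxiliary counting argument weighing the family size $\#\{L\supset K:|\Delta_L|\leq X\}$ (controlled by Davenport--Heilbronn, Wright, or M\"aki-type asymptotics for degree-$b$ extensions of $K$) against the number of small generators $\alpha$ with $K\subseteq\IQ(\alpha)$. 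I expect the delicate balance between these two counts---integral versus non-integral generators, and the precise growth of the relative discriminant---to dictate both the exact form of the exponent and the split between $b\leq 3$ and $b>3$ in the statement.
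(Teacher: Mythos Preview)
Your first approach via the relative Mahler/Silverman bound already proves the theorem, and with a better exponent than stated. The quantity $1/\kappa=1/(2D(b-1))$ you obtain is in every case at least the threshold in the statement: for $b=2$ one has $1/(2D)>1/(3D)$; for $b=3$ equality holds; and for $b>3$ a direct computation gives
\[
\frac{1}{2D(b-1)}-\frac{1}{2D(b+1)}=\frac{1}{D(b^2-1)}>\frac{1}{Db^2(b+1)}.
\]
You were comparing $2D(b-1)$ against $D(b+1)$, but $D(b+1)$ is the theorem's threshold only for $b\le 3$; for larger $b$ the stated bound is the smaller quantity $1/(2D(b+1))+1/(Db^2(b+1))$, and your exponent beats it. The only genuine technical point is the passage from $\mathrm{disc}(m_\alpha)$ to $\mathfrak{d}_{L/K}$ for non-integral $\alpha$, but this is exactly what Silverman's relative inequality (his Theorem~2) handles via the different and the product formula; once invoked, every $L\supset K$ with $|\Delta_L|$ large enough satisfies $\delta(L)>|\Delta_L|^\gamma$ for any $\gamma<1/(2D(b-1))$. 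No special Kummer family is needed---any infinite collection of degree-$b$ extensions of $K$ will do.

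The paper does not take this route. It implements precisely the counting argument you sketch at the end: fix $F$ of degree $D/b$, set $S=\{L:[L:F]=b\}$, and compare a lower bound $N_\Delta(S,T)\gg T^\eta$ (with $\eta=1$ for $b\le 3$ by Datskovsky--Wright, $\eta=1/2+1/b^2$ in general by Ellenberg--Venkatesh) against Schmidt's upper bound $\#\{\alpha:[F(\alpha):F]=b,\ H(\alpha)\le T\}\ll T^{D(b+1)}$. A one-line pigeonhole (Proposition~\ref{propVaalerW1}) then shows that $\{L\in S:\delta(L)\le|\Delta_L|^\gamma\}$ has density zero whenever $\gamma<\eta/(D(b+1))$, which is exactly the stated threshold. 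This yields a weaker exponent than relative Silverman but a density-one conclusion, and the same template transfers verbatim to the conditional Theorem~\ref{th2} for prime $D$, where no intermediate subfield is available.
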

Note that for composite $D>4$ we have
$$\frac{1}{2D(b+1)}+\frac{1}{Db^2(b+1)}>\frac{1}{2D(\sqrt{D}+1)}>{\frac{1}{2D(D-1)}}.$$
Thus, Theorem \ref{th1} provides a negative answer to Question \ref{Ruppert1} for all composite $D$. 
In fact, we prove a stronger result, namely: 
let $F$ be any number field  of degree $D/b$;
when enumerated by the modulus of the discriminant, the subset of all degree $b$ extensions 
$L$ of $F$, defined by $\delta(L)>|\Delta_L|^{\ratio}$, has density $1$ (for the precise statement we refer to 
Corollary \ref{cor1}).

Our proof strategy requires a good lower bound for the number of degree $D$ fields with bounded 
modulus of the discriminant. Essentially optimal bounds are available when $D$ is even or divisible by 
$3$, and if $D$ is composite we still have some useful bounds. However, a folk conjecture (sometimes attributed to Linnik)
predicts the asymptotics $c_D T$ as $T$ goes to infinity, for some constant $c_D>0$.  Unfortunately, the best general lower
bounds are only of order $T^{1/2+1/D^2}$ which is just slightly weaker than what we need. Therefore,  our next result
is conditional.
 
\begin{theorem}\label{th2}
Suppose that  $D>3$, and suppose that there exist constants $c_D>0$ and $\nu_D>1/2+1/(D-1)$ such that 
the number of degree $D$ fields $L\subset \Qbar$ with absolute value of the discriminant no larger than $T$
is at least $c_DT^{\nu_D}$ for all $T$ large enough.
Then there exists $\ratio>1/(2D(D-1))$ such that there are infinitely many number fields $L$ of degree $D$ with
\begin{alignat*}1
\delta(L)> |\Delta_L|^{\ratio}.
\end{alignat*}
\end{theorem}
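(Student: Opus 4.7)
The plan is to compare two counts of degree $D$ number fields with bounded discriminant: the hypothesized lower bound on their total number, and an upper bound on those possessing a small generator. If the latter grows more slowly, most fields must fail to have a small generator.

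First I would invoke the classical Northcott--Schmidt type counting estimate: the number of algebraic numbers $\alpha\in\Qbar$ with $[\IQ(\alpha):\IQ]=D$ and $H(\alpha)\leq X$ is $O(X^{D(D+1)})$ as $X\to\infty$. This is standard (it already follows from counting integer polynomials of degree $D$ whose coefficients are bounded in terms of the Mahler measure $X^D$, and was sharpened to an asymptotic by Schmidt and by Masser--Vaaler). Since every degree $D$ field $L$ with $\delta(L)\leq X$ is generated by at least one such $\alpha$, the number of degree $D$ fields with $\delta(L)\leq X$ is itself $O(X^{D(D+1)})$. In particular, any such field with $|\Delta_L|\leq T$ and $\delta(L)\leq|\Delta_L|^{\ratio}$ satisfies $\delta(L)\leq T^{\ratio}$, so the count of such fields is at most $C_D\,T^{\ratio D(D+1)}$ for some constant $C_D$.

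Next I would choose $\ratio$ in the open interval
\begin{alignat*}1
\frac{1}{2D(D-1)}<\ratio<\frac{\nu_D}{D(D+1)}.
\end{alignat*}
This interval is nonempty precisely because the hypothesis $\nu_D>\frac{1}{2}+\frac{1}{D-1}=\frac{D+1}{2(D-1)}$ rearranges to $\frac{\nu_D}{D(D+1)}>\frac{1}{2D(D-1)}$. With such a $\ratio$ we have $\ratio D(D+1)<\nu_D$, so for all sufficiently large $T$ the lower bound $c_D\,T^{\nu_D}$ on degree $D$ fields with $|\Delta_L|\leq T$ strictly exceeds the upper bound $C_D\,T^{\ratio D(D+1)}$ on those among them satisfying $\delta(L)\leq|\Delta_L|^{\ratio}$.

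Consequently, for every sufficiently large $T$ there exists a degree $D$ field $L$ with $|\Delta_L|\leq T$ and $\delta(L)>|\Delta_L|^{\ratio}$; letting $T\to\infty$ yields infinitely many such $L$. The argument is short because all of the real analytic content is packaged into the two off-the-shelf inputs. The only genuine difficulty is matching exponents: one needs the Schmidt-type upper bound with exactly the exponent $D(D+1)$, and it is precisely this exponent, measured against the Silverman threshold $1/(2D(D-1))$, that forces the critical value $\frac{1}{2}+\frac{1}{D-1}$ appearing in the hypothesis on $\nu_D$.
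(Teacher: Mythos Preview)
Your argument is correct and is essentially the paper's own proof: the paper packages the counting comparison as a separate proposition (comparing $N_\Delta(S,T)\gg T^{\eta}$ against $N_H(P_S,T)\ll T^{\theta}$ to force $\delta(L)>|\Delta_L|^{\gamma}$ for any $\gamma<\eta/\theta$) and then applies it with $S=\coll_D(\IQ)$, $\eta=\nu_D$, and $\theta=D(D+1)$ from Schmidt's bound, exactly as you do directly. One small wording point: to conclude ``infinitely many'' you should note that the number of good fields with $|\Delta_L|\le T$ is at least $c_D T^{\nu_D}-C_D T^{\gamma D(D+1)}\to\infty$, since merely exhibiting one such field for each large $T$ does not by itself give infinitely many.
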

Thanks to \cite{Bhargava10}, the hypothesis of 
Theorem \ref{th2} is satisfied for $D=5$, and hence we get an unconditional negative answer to Question \ref{Ruppert1}
for $D=5$. Furthermore, Theorem \ref{th2} shows that most likely the answer to Question \ref{Ruppert1} is ``no" for all $D>3$.
However, our method  sheds no light on the case $D=3$. 

In this article we use Vinogradov's notation $\ll$ and $\gg$ at various places.
The involved constants are allowed to depend on all quantities except on the parameter $T$, which is introduced in the next section.

\section{Enumerating fields: discriminant versus delta-invariant}\label{enumerating}

Any number field is considered a subfield of the fixed algebraic closure $\Qbar$.
Let $k$ be a number field, let $m=[k:\IQ]$,  let $L/k$ be a finite extension of degree $d=[L:k]>1$, and 
put $D=[L:\IQ]=md$.
For the remainder of this paper we set
\begin{alignat}1\label{coll}
\coll=\coll_{d}(k)=\{L\subset \Qbar; [L:k]=d\},
\end{alignat}
and for a subset $S\subset\coll$, and $\ratio\geq 0$ we set
\begin{alignat}1\label{Seta}
S_{\ratio}=\{L\in S; \delta(L)>|\Delta_{L}|^{\ratio}\}.
\end{alignat}
We want to enumerate the fields in $S$ in two different ways: once by the discriminant (more precisely, the modulus 
thereof), and once by the delta invariant $\delta(\cdot)$. Thus we introduce the counting functions
\begin{alignat*}1
N_{\Delta}(S,T)&=|\{L\in S; |\Delta_{L}|\leq T\}|,\\
N_{\delta}(S,T)&=|\{L\in S; \delta(L)\leq T\}|.
\end{alignat*}
Note that both cardinalities are finite; the first one by Hermite's Theorem, the second one by Northcott's Theorem.
Next we introduce the set of generators of fields of $S$
\begin{alignat*}1
P_S=\{\alpha\in \Qbar; \IQ(\alpha)\in S\},
\end{alignat*}
and its counting function
\begin{alignat*}1
N_H(P_S,T)=|\{\alpha\in P_S; H(\alpha)\leq T\}|.
\end{alignat*}
Again, the above cardinality is finite by Northcott's Theorem.

The proof of Theorem \ref{th1} is based on two simple observations, the first of which, is presented as the following proposition. 
\begin{proposition}\label{propVaalerW1}
Suppose there are positive reals $\lb$, $\ub$, and $\ratio<\lb/\ub$ such that $N_{\Delta}(S,T)\gg T^\lb$ and $N_H(P_S,T)\ll T^\ub$ for all $T$ large enough. Then
\begin{alignat*}1
\lim_{T\rightarrow \infty}\frac{N_{\Delta}(S_{\ratio},T)}{N_{\Delta}(S,T)}=1.
\end{alignat*}
\end{proposition}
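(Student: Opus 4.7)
The plan is to bound the ``exceptional'' set $S \setminus S_{\ratio}$ inside $\{L \in S : |\Delta_L| \leq T\}$ directly, by injecting it into the set of generators of small height, and then use the hypothesis $\ratio < \lb/\ub$ to extract a power-saving ratio.

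First I would rewrite the desired limit as
\begin{alignat*}1
\frac{N_{\Delta}(S_{\ratio},T)}{N_{\Delta}(S,T)} = 1 - \frac{N_{\Delta}(S \setminus S_{\ratio},T)}{N_{\Delta}(S,T)},
\end{alignat*}
so it suffices to show that the second term tends to $0$ as $T \to \infty$. For the denominator I would invoke the hypothesis $N_{\Delta}(S,T) \gg T^{\lb}$ directly.

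For the numerator, the key observation is that every $L \in S \setminus S_{\ratio}$ with $|\Delta_L| \leq T$ satisfies $\delta(L) \leq |\Delta_L|^{\ratio} \leq T^{\ratio}$. Since the infimum in the definition of $\delta(L)$ is attained (Northcott), there exists at least one generator $\alpha \in P_S$ with $\IQ(\alpha) = L$ and $H(\alpha) \leq T^{\ratio}$. Because a generator determines its field uniquely, assigning to each such $L$ one such $\alpha$ yields an injection into $\{\alpha \in P_S : H(\alpha) \leq T^{\ratio}\}$. Using the hypothesis $N_H(P_S,T) \ll T^{\ub}$ with $T$ replaced by $T^{\ratio}$ gives
\begin{alignat*}1
N_{\Delta}(S \setminus S_{\ratio},T) \leq N_H(P_S, T^{\ratio}) \ll T^{\ratio \ub}.
\end{alignat*}
Combining the two bounds yields
\begin{alignat*}1
\frac{N_{\Delta}(S \setminus S_{\ratio},T)}{N_{\Delta}(S,T)} \ll T^{\ratio \ub - \lb},
\end{alignat*}
and since $\ratio < \lb/\ub$ the exponent is negative, so the quotient tends to $0$.

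There is no real obstacle here; the argument is structurally very short. The only point requiring a moment's attention is the injection in the previous paragraph, which rests on the trivial but crucial fact that an algebraic number generates only one field over $\IQ$. The rest is bookkeeping with the Vinogradov symbols.
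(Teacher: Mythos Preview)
Your proof is correct and follows essentially the same route as the paper's: bound $N_{\Delta}(S\setminus S_{\ratio},T)$ by $N_H(P_S,T^{\ratio})\ll T^{\ratio\ub}$ via the map between fields and generators, then divide by $N_{\Delta}(S,T)\gg T^{\lb}$ and use $\ratio\ub<\lb$. The only cosmetic difference is that the paper phrases the comparison as a surjection $\alpha\mapsto\IQ(\alpha)$ (passing through the auxiliary counting function $N_{\delta}$), whereas you phrase it as an injection $L\mapsto\alpha$ by choosing one minimal-height generator; these are two sides of the same trivial observation.
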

\begin{proof}
Directly from the definitions we get
\begin{alignat*}1
N_{\Delta}(S\backslash S_{\ratio},T)\leq N_{\delta}(S\backslash S_{\ratio},T^{\ratio})\leq N_{\delta}(S,T^{\ratio}).
\end{alignat*}
The map $\alpha\rightarrow \IQ(\alpha)$ yields a surjection from $\{\alpha\in P_S; H(\alpha)\leq T^\ratio\}$ to
$\{L\in S; \delta(L)\leq T^\ratio\}$.
Hence, we have
\begin{alignat*}1
N_{\delta}(S,T^{\ratio})\leq N_H(P_S,T^{\ratio}).
\end{alignat*}
On the other hand, by the hypothesis,
\begin{alignat*}1
N_H(P_S,T^{\ratio})\ll T^{\ratio\ub},
\end{alignat*}
and
\begin{alignat*}1
N_{\Delta}(S,T)\gg T^\lb,
\end{alignat*}
provided $T$ is large enough. 
We conclude
\begin{alignat*}1
\lim_{T\rightarrow \infty}\frac{N_{\Delta}(S\backslash S_{\ratio},T)}{N_{\Delta}(S,T)}=0,
\end{alignat*}
whenever $\ratio<\frac{\lb}{\ub}$ which proves the proposition.
\end{proof}

\section{Bounds for the counting functions}\label{counting}
In view of Proposition \ref{propVaalerW1} we want to find a set $S\subset \coll$ that maximizes the ratio $\lb/\ub$.
Taking $S=\coll_b(F)\subset \coll$ as the set of fields that contain a fixed extension $F/k$ of degree  $d/b$ does not 
affect $\lb$ in a negative way as we shall see in Lemma \ref{lowerbound}, but it positively affects $\ub$ as we shall see
in Lemma \ref{upperbound}. This is the  second simple but 
important observation for the proof of Theorem \ref{th1}. 

We start with lower bounds for $\lb$, that is,  lower bounds for $N_{\Delta}(\coll_{b}(F),T)$.
\begin{lemma}\label{lowerbound}
Let  $b=b(d)>1$ be the smallest divisor of $d$, and let $F$ be an extension of $k$ of degree $d/b$.
Then we have
\begin{alignat}1\label{EllenbergVenkatesh}
N_{\Delta}(\coll_{b}(F),T)\gg T^{1/2+1/b^2}
\end{alignat}
for all $T$ large enough.
If $d$  is even or divisible by $3$ then we even have
\begin{alignat}1\label{quadrext}
N_{\Delta}(\coll_{b}(F),T)\gg T, 
\end{alignat}
for all $T$ large enough.
\end{lemma}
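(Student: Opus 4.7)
The plan is to reduce the counting problem over $\IQ$ to a counting problem for relative extensions $L/F$ with bounded norm of the relative discriminant, and then to invoke known results in the literature.

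By the conductor-discriminant tower formula,
$$|\Delta_L| = |\Delta_F|^{b} \cdot |N_{F/\IQ}(\mathfrak{d}_{L/F})|,$$
where $\mathfrak{d}_{L/F}$ denotes the relative discriminant ideal of $L/F$. Since $F$ is fixed, $|\Delta_F|^{b}$ is a positive constant, so the condition $|\Delta_L|\leq T$ is equivalent to $|N_{F/\IQ}(\mathfrak{d}_{L/F})|\leq cT$ for an explicit $c>0$ depending only on $F$ and $b$. Thus, up to a harmless multiplicative constant, $N_{\Delta}(\coll_{b}(F),T)$ equals the number of degree $b$ extensions $L\subset\Qbar$ of $F$ whose relative discriminant has absolute norm at most $cT$.

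For the general bound (\ref{EllenbergVenkatesh}), I would appeal to the theorem of Ellenberg and Venkatesh on extensions of a number field of fixed degree with bounded relative discriminant, which gives
$$\#\{L\subset\Qbar:\,[L:F]=b,\ |N_{F/\IQ}(\mathfrak{d}_{L/F})|\leq X\}\gg X^{1/2+1/b^{2}}$$
for all sufficiently large $X$. Applied with $X=cT$, this yields (\ref{EllenbergVenkatesh}) directly. For the stronger bound (\ref{quadrext}), note that the smallest divisor $b$ of $d$ equals $2$ when $d$ is even, and equals $3$ when $d$ is odd and divisible by $3$. In the first case, quadratic extensions of $F$ are parameterized by the nontrivial square classes in $F^{\times}/(F^{\times})^{2}$, and a routine Kummer-theoretic counting argument yields a count of order $X$ for those with $|N_{F/\IQ}(\mathfrak{d}_{L/F})|\leq X$. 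In the second case, the theorem of Datskovsky and Wright (generalizing Davenport-Heilbronn to an arbitrary base field $F$) provides an asymptotic of order $X$ for cubic extensions of $F$ with bounded norm of the relative discriminant. Either case combined with the tower formula gives (\ref{quadrext}).

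The substantial obstacle is simply invoking the correct deep inputs: the Ellenberg-Venkatesh bound rests on nontrivial geometry-of-numbers arguments applied to spaces parametrizing degree $b$ algebras over $F$, and the linear lower bound in the cubic case requires the Shintani-Datskovsky-Wright zeta-function machinery. The tower formula reduction and the bookkeeping with the constant $|\Delta_F|^{b}$ are routine, and once the relevant external results are quoted the lemma is immediate.
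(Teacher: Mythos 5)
Your proof follows essentially the same route as the paper: reduce to counting by norm of the relative discriminant via the tower formula $|\Delta_L|=|\Delta_F|^{b}N_{F/\IQ}(\mathfrak{D}_{L/F})$, then invoke Ellenberg--Venkatesh for the general bound and Datskovsky--Wright for the cubic case. The only (harmless) divergence is that for $b=2$ you sketch a direct Kummer-theoretic count, whereas the paper simply cites Datskovsky--Wright's Theorem 4.2 for the same asymptotic; both are correct, though your ``routine'' quadratic count still needs care with units, the class group, and primes above $2$ to be fully rigorous.
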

\begin{proof}
First we recall that for $L\in \coll_b(F)$ we have $|\Delta_L|=|\Delta_F|^bN_{F/\IQ}(\mathfrak{D}_{L/F})$,
where $N_{F/\IQ}(\cdot)$ is the absolute norm, and $\mathfrak{D}_{L/F}$ is the relative discriminant. Thus, counting fields in $\coll_b(F)$
with $|\Delta_L|\leq T$ is the same as counting fields in $\coll_b(F)$
with $N_{F/\IQ}(\mathfrak{D}_{L/F})\leq T/|\Delta_F|^b$. Therefore,
Ellenberg and Venkatesh's \cite[Theorem 1.1]{56} shows that
\begin{alignat*}1
N_{\Delta}(\coll_b(F),T)\geq c'T^{1/2+1/b^2} 
\end{alignat*}
for some $c'=c'(b,F)>0$ and all $T$ large enough. This yields (\ref{EllenbergVenkatesh}).
For (\ref{quadrext}) we note that the conjectured asymptotic formula
\begin{alignat*}1
N_{\Delta}(\coll_b(F),T)=cT+o(T), 
\end{alignat*}
where $c=c(b,F)>0$, has been proven by  Datskovsky and Wright for $b=2$  \cite[Theorem 4.2]{82} (see also \cite[Corollary 1.2]{59})
and for $b=3$  \cite[Theorem 1.1]{82}. This proves the lemma. 
\end{proof}

Next we establish an upper bound for $N_{H}(P_{\coll_{b}(F)},T)$.
Recall that $k$ is a number field of degree $m$, and also recall the notation $\coll=\coll_d(k)$ from
(\ref{coll}).
\begin{lemma}\label{upperbound}
We have for all $T>0$
\begin{alignat}1
\label{Schmidt}
N_H(P_\coll,T)\ll T^{ md(d+1)}.
\end{alignat}
With the notation of Lemma \ref{lowerbound}, in particular, 
\begin{alignat}1
\label{SchmidtFb}
N_H(P_{\coll_b(F)},T) \ll T^{ md(b+1)}.
\end{alignat}
\end{lemma}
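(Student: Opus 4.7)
The plan is to dominate each count by counting monic polynomials of the appropriate degree over the ground field. First I would observe that for any $\alpha\in P_\coll$ the inclusion $\IQ(\alpha)\supset k$ forces $\IQ(\alpha)=k(\alpha)$, so $\alpha$ is algebraic of degree exactly $d$ over $k$; the analogous statement holds for $\alpha\in P_{\coll_b(F)}$ with degree $b$ over $F$. It therefore suffices to bound, for a general number field $K$ of degree $e=[K:\IQ]$ and integer $n\geq 1$, the number of $\alpha\in\Qbar$ of degree $n$ over $K$ with $H(\alpha)\leq T$, and then specialize.

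To each such $\alpha$ with monic minimal polynomial $f(x)=x^n+c_{n-1}x^{n-1}+\cdots+c_0\in K[x]$ I would associate the projective point $(1:c_{n-1}:\cdots:c_0)\in\IP^n(K)$. The standard comparison between the projective height of the coefficient vector of $f$ and the Mahler measure of $f$ (see, e.g., \cite{BG}, Chapter~1) yields
\begin{equation*}
H\bigl((1:c_{n-1}:\cdots:c_0)\bigr)\asymp H(\alpha)^n,
\end{equation*}
so each $\alpha$ with $H(\alpha)\leq T$ produces a projective point of $\IP^n(K)$ of height $\ll T^n$. Schanuel's theorem bounds the number of such projective points by $\ll (T^n)^{e(n+1)}=T^{en(n+1)}$, and multiplying by the $n$ roots of each polynomial only changes the implicit constant.

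Specializing with $K=k$, $e=m$, $n=d$ yields (\ref{Schmidt}), while $K=F$, $e=md/b$, $n=b$ yields the exponent $b\cdot(md/b)(b+1)=md(b+1)$, giving (\ref{SchmidtFb}). There is no serious technical obstacle: the argument reduces to an application of Schanuel's theorem combined with the standard polynomial-height--Mahler-measure comparison. The conceptual content---the reason the second bound is strictly stronger---is precisely the ``second simple observation'' anticipated at the start of Section~\ref{counting}: working over the intermediate field $F$ replaces the polynomial degree $d$ by the smaller relative degree $b$, producing the improved exponent $md(b+1)<md(d+1)$.
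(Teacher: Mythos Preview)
Your argument is correct. The paper's own proof is shorter still: after the same reduction $\IQ(\alpha)=k(\alpha)$ (respectively $\IQ(\alpha)=F(\alpha)$), it simply cites Schmidt's theorem \cite[Theorem]{22}, which directly gives
\[
|\{\alpha\in\Qbar:[K(\alpha):K]=n,\ H(\alpha)\leq T\}|\ll T^{[K:\IQ]\,n(n+1)},
\]
and then specializes to $K=k$ and $K=F$. Your route via the coefficient vector, the Mahler-measure/height comparison, and Schanuel's count of projective points is precisely how one proves the upper-bound half of Schmidt's theorem, so the two arguments coincide in substance---you have unpacked the black-box citation rather than taken a genuinely different path.
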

\begin{proof}
First we note that $\IQ(\alpha)\in \coll=\coll_d(k)$ implies $[k(\alpha):k]=d$. Therefore, 
\begin{alignat*}1
N_H(P_\coll,T)\leq |\{\alpha\in \Qbar; [k(\alpha):k]=d, H(\alpha)\leq T\}|. 
\end{alignat*}
Now Schmidt \cite[Theorem]{22}  has shown that
\begin{alignat*}1
|\{\alpha\in \Qbar; [k(\alpha):k]=d, H(\alpha)\leq T\}|\leq C(m,d)T^{md(d+1)}. 
\end{alignat*}
Therefore
$N_H(P_\coll,T)\leq C(m,d)T^{md(d+1)}$, which proves (\ref{Schmidt}).
\end{proof}

\section{Density results}\label{densityresults}
Recall  the notation in (\ref{Seta}).
\begin{corollary}\label{cor1}
Let $b=b(d)>1$ be the smallest divisor of $d$, 
and suppose $\ratio$ is a real number such that
\begin{alignat*}1
\ratio<\left\{
    \begin{array}{lll}
      &1/(md(b+1)):&\text{ if }b\leq 3,\\
      &{1}/{(2md(b+1))}+{1}/{(mdb^2(b+1))}:&\text{ otherwise.}
\end{array} \right.
\end{alignat*}
Let $F$ be an extension of $k$ of degree $d/b$ and let $B=\{L\in \coll; F\subset L\}$.
Then 
\begin{alignat*}1
\lim_{T\rightarrow \infty}\frac{N_{\Delta}(B_{\ratio},T)}{N_{\Delta}(B,T)}=1.
\end{alignat*}
\end{corollary}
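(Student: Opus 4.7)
The plan is to apply Proposition \ref{propVaalerW1} directly, with $S = B$, and with the two exponents $\lb$ and $\ub$ supplied by the previous section's lemmas. First I would observe that the set $B = \{L \in \coll_d(k); F \subset L\}$ coincides with $\coll_b(F)$: for $L \in B$ one has $[L:F] = [L:k]/[F:k] = d/(d/b) = b$, and conversely any $L \in \coll_b(F)$ satisfies $[L:k] = b\cdot(d/b) = d$ and contains $F$. Hence the counting functions $N_{\Delta}(B,\cdot)$ and $N_H(P_B,\cdot)$ are literally the ones estimated in Lemmas \ref{lowerbound} and \ref{upperbound}.

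Next I would extract the exponents. Lemma \ref{upperbound} gives $N_H(P_B,T) \ll T^{md(b+1)}$, so we may take $\ub = md(b+1)$. For the lower bound, Lemma \ref{lowerbound} distinguishes two cases: when $b \leq 3$, which corresponds to $d$ being even (so $b=2$) or odd and divisible by $3$ (so $b=3$), we have $\lb = 1$; otherwise we have $\lb = 1/2 + 1/b^2$. A routine computation then shows
\begin{alignat*}1
\frac{\lb}{\ub} = \left\{\begin{array}{ll} 1/(md(b+1)) & \text{if } b \leq 3,\\[2pt] 1/(2md(b+1)) + 1/(mdb^2(b+1)) & \text{otherwise,}\end{array}\right.
\end{alignat*}
which is exactly the upper bound on $\ratio$ appearing in the statement of the corollary.

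Finally, under the hypothesis $\ratio < \lb/\ub$, Proposition \ref{propVaalerW1} yields
\begin{alignat*}1
\lim_{T\to\infty} \frac{N_{\Delta}(B_{\ratio},T)}{N_{\Delta}(B,T)} = 1,
\end{alignat*}
which is the desired conclusion. I do not foresee any substantive obstacle: the content of the corollary is essentially the matching of two exponents already computed in Lemmas \ref{lowerbound} and \ref{upperbound}, and the only care needed is in verifying the arithmetic identity for $\lb/\ub$ and in confirming that the case split "$b \leq 3$" versus "$b \geq 4$" is the same split as "$d$ even or divisible by $3$" versus "otherwise" used in Lemma \ref{lowerbound}.
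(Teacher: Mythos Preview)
Your proposal is correct and matches the paper's own proof essentially step for step: identify $B=\coll_b(F)$, take $\ub=md(b+1)$ from Lemma~\ref{upperbound} and $\lb=1$ or $\lb=1/2+1/b^2$ from Lemma~\ref{lowerbound}, then apply Proposition~\ref{propVaalerW1}. The paper is slightly terser, but your added verification that the case split ``$b\le 3$'' coincides with ``$d$ even or divisible by $3$'' (since $b$, being the smallest divisor of $d$ greater than $1$, is prime) and your explicit computation of $\lb/\ub$ are welcome clarifications.
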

\begin{proof}
First note that $B=\coll_{b}(F)$.
Thus  (\ref{EllenbergVenkatesh}) yields $N_{\Delta}(B,T)\gg T^{1/2+1/b^2}$ for $T$ large enough.
If  $d$ is even or divisible by  $3$ then by (\ref{quadrext}) we even have
$N_{\Delta}(B,T)\gg T$ for $T$ large enough.
On the other hand  (\ref{SchmidtFb}) gives 
$N_H(P_B,T)\ll T^{md(b+1)}$. 
Applying Proposition \ref{propVaalerW1} with $S=B$ yields the statement.
\end{proof}
So almost all fields in $B=\coll_{b}(F)$ satisfy $\delta(L)> |\Delta|^\ratio$.
Note that, of course, $B$ is an infinite set, and so Theorem \ref{th1} follows from Corollary \ref{cor1} by taking $k=\IQ$.
\begin{corollary}\label{rmklb}
Suppose $\ratio<1/(md(d+1))$ and suppose $d$ is even or divisible by $3$ then 
\begin{alignat*}1
\lim_{T\rightarrow \infty}\frac{N_{\Delta}(\coll_{\ratio},T)}{N_{\Delta}(\coll,T)}=1.
\end{alignat*}
\end{corollary}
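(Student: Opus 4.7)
The plan is to apply Proposition \ref{propVaalerW1} directly with $S=\coll$, so I need to supply a lower bound of the form $N_{\Delta}(\coll,T)\gg T^{\lb}$ and an upper bound of the form $N_{H}(P_{\coll},T)\ll T^{\ub}$ whose ratio $\lb/\ub$ equals $1/(md(d+1))$.

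The upper bound is essentially free: Lemma \ref{upperbound}, equation (\ref{Schmidt}), gives $N_{H}(P_{\coll},T)\ll T^{md(d+1)}$, so I take $\ub=md(d+1)$.

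For the lower bound I use the hypothesis that $d$ is even or divisible by $3$. Let $b\in\{2,3\}$ be a divisor of $d$, and choose any number field $F$ with $k\subset F\subset \Qbar$ and $[F:k]=d/b$ (if $d=b$ take $F=k$; otherwise an extension of $k$ of any prescribed degree certainly exists inside $\Qbar$). Then $\coll_{b}(F)\subset\coll_{d}(k)=\coll$, so
\begin{alignat*}1
N_{\Delta}(\coll,T)\geq N_{\Delta}(\coll_{b}(F),T)\gg T
\end{alignat*}
by equation (\ref{quadrext}) of Lemma \ref{lowerbound}. Hence $\lb=1$ is admissible.

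With these two bounds, the assumption $\ratio<1/(md(d+1))=\lb/\ub$ puts us in position to apply Proposition \ref{propVaalerW1} with $S=\coll$, which yields the claimed density statement. There is no real obstacle here: the argument is a direct assembly of the ingredients already established in Sections \ref{enumerating} and \ref{counting}; the only very mild point is the existence of an intermediate field $F$ of degree $d/b$ over $k$, which is trivial.
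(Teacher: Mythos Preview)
Your proof is correct and follows essentially the same route as the paper: choose an intermediate field $F$ of degree $d/b$ over $k$ with $b\in\{2,3\}$, use the inclusion $\coll_b(F)\subset\coll$ together with (\ref{quadrext}) to get $N_\Delta(\coll,T)\gg T$, combine with (\ref{Schmidt}) for the upper bound, and apply Proposition~\ref{propVaalerW1} with $S=\coll$. The only cosmetic point is that Lemma~\ref{lowerbound} is stated for $b=b(d)$ the \emph{smallest} prime divisor, so to invoke (\ref{quadrext}) verbatim you should take $b=2$ when $d$ is even and $b=3$ otherwise (which is exactly what the paper does), rather than an arbitrary $b\in\{2,3\}$.
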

\begin{proof}
Let $F$ be an extension of $k$ of degree $d/2$ if $d$ is even, and  of degree $d/3$ otherwise. Hence, $\coll\supset \coll_2(F)$ or $\coll\supset \coll_3(F)$ respectively, and so we conclude from (\ref{quadrext})
that  $N_{\Delta}(\coll,T)\gg T$.  Furthermore, by (\ref{Schmidt}) we have
$N_H(P_\coll,T)\ll T^{md(d+1)}$. Applying Proposition \ref{propVaalerW1} with $S=\coll$ yields the statement.
\end{proof}

Finally, to prove Theorem \ref{th2} we apply Proposition \ref{propVaalerW1}
with $S=\coll$, $k=\IQ$, $\lb=\nu_D>1/2+1/(D-1)$ and $\ub=D(D+1)$ (for the latter we have applied (\ref{Schmidt})). As $\lb/\ub>1/(2D(D-1))$ we conclude that there
exists $\ratio>1/(2D(D-1))$ such that there exist infinitely many number fields $L$ of degree $D$ that satisfy
\begin{alignat*}1
\delta(L)> |\Delta_{L}|^{\ratio}.
\end{alignat*}
This proves Theorem \ref{th2}.

\section{Cluster points}
We consider the set of values
\begin{alignat*}1
\frac{\log \delta(L)}{\log|\Delta_{L}|}
\end{alignat*}
as $L$ runs over all number fields of fixed degree $D>1$. What are the cluster points of this set? 
Combining (\ref{sil2}) and (\ref{example}) gives the smallest
cluster point
\begin{alignat*}1
\lim \inf_{[L:\IQ]=D}\frac{\log\delta(L)}{\log|\Delta_L|}=\frac{1}{2D(D-1)}.
\end{alignat*}
What about the largest cluster point?
With $b=b(D)$ as in Theorem \ref{th1} the latter implies that
\begin{alignat*}1
\lim \sup_{[L:\IQ]=D}\frac{\log\delta(L)}{\log|\Delta_L|}\geq \left\{
    \begin{array}{lll}
      &1/(D(b+1)):&\text{ if }b\leq 3,\\
      &{1}/{(2D(b+1))}+{1}/{(b^2(b+1)D)}:&\text{ otherwise.}
\end{array} \right.
\end{alignat*}
If $D$ is odd \cite[Theorem 1.2]{VaalerWidmer} or if the Dedekind zeta-function associated to the 
Galois closure of $L$ satisfies the Generalized Riemann Hypothesis
for all number fields $L$ of degree $D$, then \cite[Theorem 1.3]{VaalerWidmer}
\begin{alignat*}1
\lim \sup_{[L:\IQ]=D}\frac{\log\delta(L)}{\log|\Delta_L|}\leq 1/(2D).
\end{alignat*}
However, the best known unconditional general upper bound for the largest cluster point is $1/D$, see, e.g., \cite[Lemma 4.5]{art2}.
It might be an interesting problem to study the distribution of the cluster points, and to locate new cluster points.

\section*{Acknowledgments}
Parts of this article were discussed and written during the special semester ``Heights in Diophantine Geometry, Group Theory and Additive Combinatorics'' held at the Erwin Schr\"odinger International Institute for Mathematical Physics.

\bibliographystyle{amsplain}
\bibliography{literature}

\end{document}